\newcommand{\sa}{s\alpha}
\newcommand{\N}{\mathbb{N}}
\newcommand{\Xhat}{\widehat{X}}
\newcommand{\conc}{{}^\smallfrown}
\newcommand{\sN}{\mathcal{N}}
\newcommand{\R}{\mathbb{R}}
\newcommand{\len}{\mathrm{len}}
\theoremstyle{plain}
\newtheorem{theorem}{Theorem}
\newtheorem{lemma}[theorem]{Lemma}
\newtheorem{proposition}[theorem]{Proposition}
\theoremstyle{definition}
\newtheorem{example}[theorem]{Example}
\newtheorem{remark}[theorem]{Remark}
\title{A non-Borel special alpha-limit set in the square}
\author{Stephen Jackson}
\address{Department of Mathematics, University of North Texas, General Academics Building 435, 1155 Union Circle, \#311430, Denton, Texas 76203-5017}
\email{stephen.jackson@unt.edu}
\author{Bill Mance}
\address{Uniwersytet im. Adama Mickiewicza w Poznaniu, Collegium Mathematicum, ul. Umultowska 87, 61-614 Pozna\'{n}, Poland}
\email{william.mance@amu.edu.pl}
\author{Samuel Roth}
\address{Mathematical Institute of the Silesian University in Opava, Na Rybni\v{c}ku 1, 74601, Opava, Czech Republic}
\email{samuel.roth@math.slu.cz}
\subjclass[2020]{Primary: 03E15; 37E99 Secondary: 37B10}
\keywords{special alpha limit set, triangular map of the square, non-Borel analytic set}
\thanks{The first author was supported by NSF grant DMS-1800323. The second author was supported by grant 2019/34/E/ST1/00082 for the project ``Set theoretic methods in dynamics and number theory,'' NCN (The National Science Centre of Poland). The third author was supported by RVO (Czech Republic) funding for I\v{C}47813059.}
\begin{document}
\begin{abstract}
We consider the complexity of special $\alpha$-limit sets, a kind of backward limit set for non-invertible dynamical systems. We show that these sets are always analytic, but not necessarily Borel, even in the case of a surjective map on the unit square. This answers a question posed by Kolyada, Misiurewicz, and Snoha.
\end{abstract}

\maketitle



A \emph{topological dynamical system} is a pair $(X,f)$ where $X$ is a
compact metric space and $f\in C(X,X)$ is a continuous map of $X$ to
itself. Limit sets and backward limit sets provide some of the most important tools in
understanding the behavior of a topological dynamical system, since
they provide information about the long-term behavior of the orbits of
the system. One notion, in particular, of a backward limit set is the notion of
a {\em special $\alpha$-limit set}, which has played an important role in one-dimensional
dynamics (the notion is defined and discussed below).

Descriptive set theory provides a means for calibrating
the complexity of sets in Polish spaces, and this in turn is important for understanding the impact of
possible theorems about the objects under consideration. To give an example
of this phenomenon, consider the case of normal numbers for some base $b \geq 2$.
Let $\sN_b$ denote the set of real numbers which have a normal base-$b$ expansion.
Let $\sN^\perp_b$ denote the set of real numbers $\gamma$ such that
$\gamma+N_b \subseteq \sN_b$, that is, they preserve base-$b$ normality under addition.
By its definition, this set does not at first even appear to be a Borel set as it
involves a universal quatification over $\sN_b$ (the definition shows it to be
a coanalytic, or ${\boldsymbol{\Pi}}^1_1$ set; see \cite{Kechris} for the definition). However, a deep theorem of 
Rauzy characterizes the
normality-preserving numbers as those with zero upper
noise~\cite{Rauzy}. Since the upper noise is defined by taking limits
(and lim sup's) of sequences of continuous functions of $\gamma$, it
follows immediately that $\sN_b^\perp$ is Borel, in fact a ${\boldsymbol{\Pi}}^0_3$ set.
In fact, in \cite{AJM} it was shown that $\sN^\perp_b$ is a ${\boldsymbol{\Pi}}^0_3$-complete
set, which shows that it is no simpler than this. 

The present paper shows that the special $\alpha$-limit set of a point in
a topological dynamical system, in fact even for the case of the unit square in
$\R^2$, can be a ${\boldsymbol{\Sigma}}^1_1$-complete set, and thus not Borel.
The significance of this is that it tells us that there no 
such ``hidden theorems'' for special $\alpha$-limit sets, even  in
compact metric spaces. Thus, the definition involving an existential quantification over 
all backward orbits cannot be simplified.

The authors have recently shown that special $\alpha$-limit sets can in fact occur at any level in the Borel hierarchy, but this will be part of a forthcoming paper.

We now discuss these notions more precisely. 
The \emph{$\omega$-limit set} of a point $x\in X$ under a map $f$, denoted
$\omega(x)$ or $\omega(x,f)$, consists of all accumulation points of the \emph{forward
  orbit} $(f^n(x))_{n\in\N}$ of the point $x$. These sets are always closed, and their topological properties are well understood, see eg.~\cite{ABCP, AC, BlockCoppel, BrucknerSmital, JimSmi,Sharkovsky}.


The backward limit sets of a point $x$ play a dual role to the $\omega$-limit set. The idea is that to describe the ``source'' of a trajectory, we should reverse the direction of time. For invertible dynamical systems (homeomorphisms) this leads to a well-defined $\alpha$-limit set $\alpha(x)=\omega(x,f^{-1})$. This idea came into discrete dynamics from the study of flows, where $\alpha$-limit sets are fundamental in defining such objects as unstable manifolds, homoclinic and heteroclinic trajectories, and the Morse decompositions at the heart of Conley theory~\cite{Conley, GuckHol}.

For non-invertible mappings $f:X\to X$, the situation is more complicated, since a point may have several preimages (or none at all). There are several ways to define backward limit sets in this setting:

\begin{enumerate}[wide, labelwidth=!, labelindent=0pt]

\item The \emph{$\alpha$-limit set of a point}, denoted $\alpha(x)$, consists of all accumulation points of all \emph{preimage sequences} $(x_n)_{n\in\N}$, where $x_n\in f^{-n}(\{x\})$ for all $n$. These sets are especially useful in dimension one. Coven and Nitecki proved that a point is non-wandering for an interval map if and only if it belongs to its own $\alpha$-limit set~\cite{CovNit}, and Cui and Ding related the $\alpha$-limit sets of a unimodal interval map to its renormalizations~\cite{CuiDing}.

\item The \emph{$\alpha$-limit set of a backward orbit}, denoted $\alpha((x_n)_{n=0}^\infty)$, consists of all accumulation points of a single \emph{backward orbit}, i.e. a sequence $(x_n)_{n\in\N}$, where $f(x_{n+1})=x_n$ for all $n$. Hirsch et al. showed that the $\alpha$-limit set of a backward orbit is always internally chain transitive~\cite{Hirsch}. A converse result holds when $f$ is expansive and has the shadowing property~\cite{Good}. For maps on the interval, the $\alpha$-limit set of a backward orbit satisfies a local transportation condition which makes it simultaneously an $\omega$-limit set of $f$~\cite{Bal}. 

\item The \emph{special $\alpha$-limit set of a point}, denoted $\sa(x)$, is the union $\bigcup \alpha((x_n)_{n=0}^\infty)$ taken over all \emph{backward orbits of $x$}, i.e.\ sequences $(x_n)_{n\in\mathbb{N}}$ such that $f(x_{n+1})=x_n$ for all $n$ and $x_0=x$. These sets were defined by Hero, who showed that for interval maps, a point is in the attracting center if and only if it belongs to its own special $\alpha$-limit set~\cite{Hero}. Generalizations of this result to other one-dimensional spaces were given in~\cite{SXCZ, SXL}.
\end{enumerate}

Kolyada et al.\ pointed out that special $\alpha$-limit sets need not be closed, and asked whether they are necessarily Borel or even analytic~\cite{KMS}. The difficulty arises when $x$ has uncountably many backward orbit branches, since we are then taking an uncountable union of their (closed) accumulation sets.

The complexity of special $\alpha$-limit sets (or $\sa$-limit sets, for short) for maps of the interval $I=[0,1]$ will be addressed in a forthcoming paper, where it will be shown that $\sa(x)$ is always Borel, and in fact both $F_\sigma$ and $G_\delta$~\cite{HantakRoth}.

This paper is concerned with the complexity of $\sa$-limit sets in other compact metric spaces. We start in Section~\ref{sec:analytic} by showing that $\sa$-limit sets are always analytic. The proof is short and uses the fact that backward orbits occur in a well-known compact metric space related to $(X,f)$, namely, the natural extension.

The main construction in this paper gives a map on the unit square $I^2=[0,1]^2$ with a $\sa$-limit set which is ${\boldsymbol{\Sigma}}^1_1$-complete, i.e.\ analytic but not Borel. Our construction starts in Section~\ref{sec:shift} at the symbolic level with a one-sided shift space $X\subset\{0,1,2,3,4\}^\N$, in which the set of all ill-founded trees on a countable set have been suitably encoded into the backward orbit branches of a given point $x$. We show that $\sa(x)\subset X$ is not Borel. Then in Section~\ref{sec:square} we embed this shift space as a totally invariant subsystem for a map on the square $F:I^2\to I^2$, and show that the $\sa$-limit set of the corresponding point is not a Borel subset of $I^2$. We remark that the map $F$ in our counterexample is surjective, piecewise monotone, and triangular, i.e.~a skew product map of the form $F(x,y)=(f(x),g_x(y))$.

\section{Special $\alpha$-limit sets are analytic}\label{sec:analytic}

A \emph{Polish space} is any separable, completely metrizable topological space. A subset $B$ of a Polish space $Y$ is called \emph{analytic} (or a ${\boldsymbol{\Sigma}}^1_1$ set) if there exist a Polish space $X$, a Borel subset $A\subset X$, and a continuous map $f:X\to Y$ such that $B=f(A)$. The class of analytic sets in an uncountable Polish space strictly contains the class of Borel sets. A set is called \emph{${\boldsymbol{\Sigma}^1_1}$-complete} if it is analytic but not Borel, or equivalently, if it is analytic but its complement is not.

Recall that a topological dynamical system is a pair $(X,f)$ where $X$ is a compact metric space and $f:X\to X$ is continuous. In this section we show that $\sa$-limit sets of topological dynamical systems are always analytic.

\begin{theorem}\label{th:analytic}
Every $\sa$-limit set of a topological dynamical system $(X,f)$ is analytic.
\end{theorem}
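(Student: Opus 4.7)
The plan is to exhibit $\sa(x)$ as a continuous image of a Borel subset of a Polish space. For this I would work in the inverse-limit (natural-extension) space
\[
\widehat{X} = \{(x_n)_{n=0}^\infty \in X^\N : f(x_{n+1}) = x_n \text{ for all } n\geq 0\},
\]
which is a closed subspace of the compact metric space $X^\N$ and is therefore itself compact metric (in particular, Polish). The fiber
\[
B(x) = \{(x_n)_{n=0}^\infty \in \widehat{X} : x_0 = x\}
\]
of backward orbits of $x$ is closed in $\widehat{X}$, hence also compact metric.

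Next I would show that the ``accumulation point'' relation defines a Borel subset of $B(x)\times X$. Concretely, set
\[
C = \{((x_n)_{n=0}^\infty, y) \in B(x) \times X : y \in \alpha((x_n)_{n=0}^\infty)\}.
\]
Unwinding the definition of accumulation point of a sequence, a pair $((x_n),y)$ lies in $C$ iff for every $k,N \in \N$ there exists $n \geq N$ with $d(x_n,y) < 1/k$. For fixed $k$ and $N$ the set of such pairs is open in $B(x)\times X$, because each coordinate projection $(x_n)\mapsto x_n$ and the metric $d$ are continuous. Hence $C$ is a countable intersection of open sets, that is, a $G_\delta$ set, and in particular Borel.

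Finally, by the very definition of the special $\alpha$-limit set,
\[
\sa(x) = \pi(C),
\]
where $\pi\colon B(x) \times X \to X$ is the projection onto the second coordinate. Since $\pi$ is continuous, $B(x)\times X$ is Polish, and $C$ is Borel, the image $\sa(x)$ is analytic by the definition recalled at the start of the section. The argument presents no real obstacle; the only point requiring any care is the routine verification that the accumulation-point relation is $G_\delta$, and the trivial observation that if $B(x)=\emptyset$ (for instance because $x$ has no preimage under some iterate of $f$) then $\sa(x)=\emptyset$ is analytic.
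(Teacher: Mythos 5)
Your proposal is correct and follows essentially the same route as the paper: both pass to the natural extension $\Xhat$, observe that the accumulation-point relation is a $G_\delta$ (hence Borel) subset of a product of compact metric spaces, and express $\sa(x)$ as the continuous projection of that Borel set restricted to the fiber over $x$. The only cosmetic difference is that you restrict to the fiber $B(x)$ before forming the relation, whereas the paper intersects the relation with $A\times X$ afterward.
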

\begin{proof}
Give $X^\N$ the product topology. It is compact and metrizable. For example, if $d$ is the metric on $X$, then $\widehat{d}(x_0x_1\cdots,y_0y_1\cdots) = \sum_{n} 2^{-n} \min(1,d(x_n,y_n))$ is one compatible metric on $X^\N$. Now let $\Xhat\subset X^\N$ be the closed subspace
\begin{equation*}
\Xhat = \{ x_0 x_1 \cdots ~|~ f(x_{n+1})=x_n \text{ for all } n\in\N\}.
\end{equation*}
$\Xhat$ is a topological space whose points are the backward orbits of $(X,f)$. We remark that the space $\Xhat$ equipped with the map $x_0x_1\cdots \mapsto f(x_0)x_0x_1\cdots$ is a well-known object in topological dynamics called the natural extension of $(X,f)$.

Consider the relation $R\subset \Xhat\times X$ given by
\begin{align*}
R &= \{ (x_0x_1\cdots, y) ~|~ x_{n_i} \to y \text{ along some subsequence } n_i\to\infty \} \\
&= \bigcap_{j\in\N} \bigcap_{N\in\N} \bigcup_{n=N}^\infty \left\{ (x_0x_1\cdots, y) ~|~ d(x_n,y) < \tfrac{1}{j} \right\}.
\end{align*}
Then $R$ is a countable intersection of open sets, so it is Borel.

Let $\pi_0:\Xhat \to X$ be the projection on the zeroth coordinate, $x_0x_1\cdots \mapsto x_0$. Now fix an arbitrary point $x\in X$ and let $A=\pi_0^{-1}(\{x\})$. Then $A$ is closed in $\Xhat$. Finally, let $\pi:\Xhat\times X \to X$ be the map $(x_0x_1\cdots,y)\mapsto y$. By the definition of $\sa$-limit sets we have
\begin{equation*}
\sa(x)=\pi(R\cap(A\times X)).
\end{equation*}
Thus we have expressed $\sa(x)$ as the continuous image of a Borel subset of the compact metric space $\Xhat\times X$. It follows that $\sa(x)$ is analytic.
\end{proof}

\section{A non-Borel $\sa$-limit set in a shift space}\label{sec:shift}

A \emph{word} of length $n$ (in the alphabet $\N$) is a finite sequence $s=(s_0,\ldots,s_{n-1})\in \N^n$ and an infinite word is a sequence $s=(s_0,s_1,\ldots)\in\N^\N$. We write $\len(s)\in\N\cup\{\infty\}$ for the length of a word $s$. Given a natural number $i\leq\len(s)$, we write $s|i=(s_0,s_1,\ldots,s_{i-1})$ and call this an \emph{initial segment} of $s$. We write $t\subseteq s$ to indicate that $t$ is an initial segment of $s$. The set of all finite-length words is denoted $\N^{<\N}$. This includes the \emph{empty word} $\emptyset$ of length 0, which is an initial segment of every word.

If $s, t$ are words and $\len(s)<\infty$, then we write $s\conc t$ for the \emph{concatenation} of these words, where $(s\conc t)_i=s_i$ for $0\leq i<\len(s)$ and $(s\conc t)_i=t_{i-\len(s)}$ for $\len(s)\leq i< \len(s)+\len(t)$. Given $a\in \N$ we write $a^n$ for the word $(a,\ldots,a)$ of length $n$. Similarly, we write $a^\infty$ for the infinite word $(a,a,\ldots)$. When no confusion can result, we will sometimes drop the parentheses and commas from the notation (especially when we use only single-digit natural numbers) and write a word simply as a block of symbols, eg.\ $s=(4,4,0)=440$ defines a word of length $3$.

Let $\mathcal{P}(\N^{<\N})$ denote the power set of $\N^{<\N}$, i.e.\ the set of all subsets of $\N^{<\N}$. This set becomes a Polish space when we give it the topology for which the \emph{cylinder sets} $N_s=\{T\in\mathcal{P}(\N^{<\N})~:~s\in T\}$, $s\in\N^{<\N}$ form a basis. Note that the cylinder sets are both open and closed in this topology.

A \emph{tree} (on the set of natural numbers) is any set of words $T\in\mathcal{P}(\N^{<\N})$ which is closed with respect to initial segments, i.e.\ if $t\in T$ and $s\subseteq t$, then $s\in T$ as well. An \emph{infinite branch} of a tree $T$ is any infinite word $y$ such that $y|n \in T$ for all $n\in\N$. The \emph{body} of $T$ is the set $[T] \subset \N^\N$ of all its infinite branches.

Let $IF$ be the set of all trees on $\N$ which have an infinite branch, the so-called \emph{ill-founded} trees. Let $R\subset \mathcal{P}(\N^{<\N}) \times \N^\N$ be the set of all $(T,y)$ for which $T$ is an ill-founded tree and $y$ is an infinite branch of $T$. It is easy to see that $R$ is closed and $IF=\pi(R)$, where $\pi:\mathcal{P}(\N^{<\N}) \times \N^\N \to \mathcal{P}(\N^{<\N})$ is the projection on the first coordinate. This shows that $IF$ is an analytic set in the Polish space $\mathcal{P}(\N^{<\N})$. But it is known that $IF\subset \mathcal{P}(\N^{<\N})$ is not Borel; in fact, this is regarded as the prototypical example of a ${\boldsymbol{\Sigma}}^1_1$-complete set, see~\cite{Kechris}.

We wish to ``encode'' the set $IF$ into a $\sa$-limit set. Let $2^\N$ be the standard Cantor space. Choose an enumeration $(s_i)$ of $N^{<\N}$ and define a homeomorphism $h:\mathcal{P}(\N^{<\N}) \to 2^{\N}$ by letting $h(T)$ be the point $x$ such that $x_i=1$ whenever $s_i\in T$ and $x_i=0$ whenever $s_i\not\in T$. Let $5^\N=\{0,1,2,3,4\}^\N$ be the one-sided shift space in five symbols (it is also a Cantor space) and let $\sigma:5^\N\to 5^\N$ be the shift map defined by $(\sigma x)_i=x_{i+1}$. A \emph{subshift} in $5^\N$ is any closed subset $X\subseteq 5^\N$ such that $\sigma(X)\subseteq X$. For $T$ an ill-founded tree and $y$ an infinite branch of $T$ we define inductively points $\omega_n(T,y)\in5^\N$ by
\begin{equation}\label{omega}
\begin{aligned}
\omega_0(T,y)&=3\conc 4\conc 0^\infty,\\
\omega_{n}(T,y)&=3\conc (x|n) \conc 2^{n+y_{n-1}} \conc \omega_{n-1}(T,y), \qquad \text{for $n\geq 1$, where $x=h(T)$}.
\end{aligned}
\end{equation}
Then define a subset $X\subset5^\N$ by
\begin{equation}\label{X}
X:=\{0,1,2,3\}^\N \cup \{\sigma^j(\omega_i(T,y))~|~(T,y)\in R, i,j\in\N\}.
\end{equation}

\begin{example}
Let $T$ be the tree of strictly increasing finite words $s\in\N^{<\N}$ and let $y=(2,3,5,7,11,\ldots)\in \N^\N$ be an infinite word in the body of $T$. Let $(s_i)$ be an enumeration of $\N^{<\N}$ whose first few terms are $s_0=\emptyset$, $s_1=(0)$, $s_2=(0,0)$, so that $s_0, s_1\in T$ but $s_2\not\in T$. Then
\begin{equation*}
\omega_3(T,y)=3\;110\;22222222\; 3\;11\;22222\; 3\;1\;222\; 3\; 4\; 00000\cdots
\end{equation*}
Thus, from $\omega_n(T,y)$ we can read off the first $n$ symbols of $x=h(T)$ and $y$. Each block of 2's encodes a digit of $y$. The blocks of 0's and 1's record the initial segments of $x$.
\end{example}

\begin{lemma}\label{lem:X}
The set $X\subset5^\N$ is a subshift and $\sigma(X)=X$.
\end{lemma}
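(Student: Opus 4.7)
The plan is to establish three things: shift invariance $\sigma(X)\subseteq X$, surjectivity $X\subseteq \sigma(X)$, and closedness of $X$. Shift invariance is immediate: for $w\in\{0,1,2,3\}^\N$ we have $\sigma(w)\in\{0,1,2,3\}^\N$, and for $w=\sigma^j(\omega_i(T,y))$ with $(T,y)\in R$ we have $\sigma(w)=\sigma^{j+1}(\omega_i(T,y))$, both still in $X$. For surjectivity I would exhibit a preimage in $X$ for each $w\in X$: prepend a $0$ when $w\in\{0,1,2,3\}^\N$; take $\sigma^{j-1}(\omega_i(T,y))\in X$ when $w=\sigma^j(\omega_i(T,y))$ with $j\geq 1$; and when $w=\omega_i(T,y)$ use the recursion $\omega_{i+1}(T,y)=3\conc (x|(i+1))\conc 2^{(i+1)+y_i}\conc \omega_i(T,y)$ to conclude that $\sigma^k(\omega_{i+1}(T,y))=\omega_i(T,y)$ for $k=2i+3+y_i$, so $\sigma^{k-1}(\omega_{i+1}(T,y))\in X$ is a preimage of $w$.

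Closedness is the main obstacle. The key structural observation is that every $\omega_i(T,y)$ contains exactly one occurrence of the symbol $4$, followed by an infinite tail of zeros. This is an easy induction on $i$ from the base case $\omega_0=3\conc 4\conc 0^\infty$ together with the fact that the recursion prepends only symbols drawn from $\{0,1,2,3\}$. Consequently every point of $X$ either lies in $\{0,1,2,3\}^\N$ or contains a unique $4$ with zeros thereafter.

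Now suppose $w^{(n)}\to w$ with $w^{(n)}\in X$. If infinitely many $w^{(n)}$ contain no $4$, they lie in the closed set $\{0,1,2,3\}^\N$, and so does $w$. Otherwise, after passing to a subsequence, each $w^{(n)}$ has its unique $4$ at some position $p_n$. If $p_n\to\infty$ along a further subsequence, then $w$ has no $4$ and again $w\in\{0,1,2,3\}^\N\subseteq X$. If instead $p_n$ is bounded, pass to a subsequence with $p_n=p$ constant; then $w^{(n)}_p=4$ and $w^{(n)}_k=0$ for all $k>p$ hold independently of $n$, while coordinatewise convergence forces $w^{(n)}_k=w_k$ for $n$ large at each of the finitely many positions $k<p$. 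Taking $n$ large enough that this agreement holds at every such $k$ simultaneously yields $w^{(n)}=w$, so $w\in X$. The crux is therefore the rigidity of the $\omega_i$'s, which forces a clean dichotomy between losing the $4$ in the limit (so $w$ slips into $\{0,1,2,3\}^\N$) and eventual stabilization of the sequence.
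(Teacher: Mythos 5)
Your proposal is correct. The invariance and surjectivity parts coincide with the paper's: the paper likewise notes that $\sigma(X)\subseteq X$ is immediate from the definition and that $\sigma^{t_n}(\omega_n(T,y))=\omega_{n-1}(T,y)$ with $t_n=2n+1+y_{n-1}$ (your $k=2i+3+y_i$ is the same count with $n=i+1$), though you are more explicit about the case split giving a $\sigma$-preimage of every point. Where you genuinely diverge is closedness. The paper argues that $X$ is a subshift in the classical sense by exhibiting a set of forbidden words --- any word beginning with a $4$ followed by a nonzero symbol, and any word ending in $4$ that is not an initial segment of some $\sigma^j(\omega_i(T,y))$ --- and asserting that $X$ is exactly the set of points avoiding them. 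You instead prove sequential closedness directly: from the structural fact that each $\omega_i(T,y)$ has a unique $4$ followed by $0^\infty$, a convergent sequence in $X$ either loses its $4$'s to infinity (and the limit falls into the closed set $\{0,1,2,3\}^\N$) or has the $4$ at a bounded position, forcing eventual stabilization. Both arguments rest on the same rigidity of the symbol $4$; yours is more detailed and verifiable at each step, while the paper's forbidden-word formulation is terser and additionally re-packages shift-invariance for free. Either is acceptable.
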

\begin{proof}
Invariance $\sigma(X)\subseteq X$ is immediate from~\eqref{X}. Surjectivity of $\sigma|_X$ follows from~\eqref{omega},~\eqref{X}, and the observation that $\sigma^{t_n}(\omega_n(T,y))=\omega_{n-1}(T,y)$, where $t_n=2n+1+y_{n-1}$.

Equations~\eqref{omega} and~\eqref{X} implicitly determine a set of forbidden words $F$ in the alphabet $\{0,1,2,3,4\}$. Any word with an initial 4 and a non-zero symbol occuring after it is forbidden. Any word which ends with a 4 is forbidden if it is not an initial segment of some $\sigma^j(\omega_i(T,y))$, $i,j\in\N$, $(T,y)\in R$. A point $x\in5^\N$ is in $X$ if and only if it contains none of those forbidden words. This shows that $X$ is closed.
\end{proof}

Now consider the dynamical system given by the subshift $\sigma|_X : X \to X$ and fix the point $\omega_0=3\conc4\conc0^\infty\in X$. The backward orbit branches of $\omega_0$ correspond to the ill-founded trees, allowing us to prove the following result.

\begin{theorem}\label{th:shift-nonBorel}
The set $\sa(\omega_0)\subset X$ is ${\boldsymbol{\Sigma}}^1_1$-complete.
\end{theorem}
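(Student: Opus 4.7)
The plan is to build a continuous reduction $\varphi: \mathcal{P}(\N^{<\N}) \to X$ from the prototypical ${\boldsymbol{\Sigma}}^1_1$-complete set $IF$ into $\sa(\omega_0)$; together with Theorem~\ref{th:analytic} this will give ${\boldsymbol{\Sigma}}^1_1$-completeness. I take $\varphi(T) := 3 \conc h(T) \in \{0,1,2,3\}^\N \subset X$, which is continuous since $h$ is a homeomorphism and prepending a letter is continuous. The goal is the equivalence $T \in IF \iff \varphi(T) \in \sa(\omega_0)$. The forward direction is easy: given $(T, y) \in R$, the backward orbit passing through $\omega_k(T, y)$ for every $k \geq 0$ (existing by the identity $\sigma^{2k+1+y_{k-1}}(\omega_k(T,y)) = \omega_{k-1}(T,y)$ from the proof of Lemma~\ref{lem:X}) satisfies $\omega_k(T, y) \to 3 \conc h(T)$ coordinatewise by direct inspection of~\eqref{omega}, so $\varphi(T) \in \sa(\omega_0)$.

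The converse is substantive. Fix a backward orbit $(z_n)$ and a subsequence $n_i \to \infty$ with $z_{n_i} \to \varphi(T)$, and write $z_n = a_{n-1} \cdots a_0 \conc \omega_0$ with $a_j \in \{0,1,2,3,4\}$; the forbidden-word analysis from the proof of Lemma~\ref{lem:X} forces $a_j \in \{0,1,2,3\}$. Because $z_n$ retains the $4$ at position $n+1$, each $z_n$ with $n \geq 1$ must be realized as $\sigma^{j_n}(\omega_{k_n}(T_n, y_n))$ for some witness $(T_n, y_n) \in R$, and the $3$'s in its pre-$4$ portion occupy exactly the ``separator'' positions of the block decomposition of $\omega_k$ obtained by unrolling~\eqref{omega}. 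Since $z_{n_i} \to 3 \conc h(T)$ forces $a_{n_i - 1} = 3$ for large $i$, the $a$-sequence has infinitely many $3$'s at unbounded indices $I_1 < I_2 < \cdots$, and between consecutive separators at $I_{\ell - 1}$ and $I_\ell$ the rigid block decomposition requires exactly $\ell + y^*_{\ell - 1}$ twos followed by the reversed word $x^*_{\ell - 1} \cdots x^*_0 \in \{0,1\}^\ell$, uniquely extracting a pair $(x^*, y^*) \in 2^\N \times \N^\N$. Let $T^*$ be the tree with $h(T^*) = x^*$.

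The main obstacle is then to show $(T^*, y^*) \in R$ and $T^* = T$. For $(T^*, y^*) \in R$: although the witnesses $(T_n, y_n)$ may \emph{a priori} vary with $n$, the rigidity of the block decomposition forces the visible portion of each $(h(T_n), y_n)$ to agree with $(x^*, y^*)$; exposing all blocks as $n \to \infty$ then forces $y^*|k \in T^*$ for every $k$, so $y^*$ is an infinite branch of $T^*$. For $T^* = T$: whenever $n_i - 1 = I_{\ell_i}$, the letters $a_{n_i - 2}, \ldots, a_{n_i - 1 - \ell_i}$ are simultaneously the symbols $x^*_0, \ldots, x^*_{\ell_i - 1}$ (from the reversed length-$\ell_i$ block just before separator $I_{\ell_i}$) and $h(T)_0, \ldots, h(T)_{\ell_i - 1}$ (from convergence $z_{n_i} \to 3 \conc h(T)$), while $\ell_i \to \infty$; hence $h(T^*) = h(T)$ and $T^* = T \in IF$. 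The delicate step is this combinatorial decoding of the $a$-sequence in the presence of potentially varying witnesses $(T_n, y_n)$, which is where I expect the most care to be needed.
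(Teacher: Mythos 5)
Your proposal is correct and follows essentially the same route as the paper: the same reduction $T\mapsto 3\conc h(T)$, the same forward direction via a backward orbit passing through the points $\omega_k(T,y)$, and the same converse argument that decodes the rigid block structure of a backward orbit, handles the potentially varying witnesses $(T_n,y_n)$ by noting their visible initial segments are forced to agree, and passes to the limit using closedness of $R$. The paper merely organizes the decoding differently, restricting to the subsequence of orbit points beginning with the separator symbol $3$ and writing each as $\omega_{n_i}(T_i,y_i)$ rather than working symbol-by-symbol with the prepended sequence, so the difference is cosmetic.
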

\begin{proof}
We know from Theorem~\ref{th:analytic} that $\sa(\omega_0)$ is analytic. It remains to show that it is not Borel. Define a map $f:\mathcal{P}(\N^{<\N}) \to X$ by $f(T)=3\conc h(T)$. The map is well-defined since $X$ contains $\{0,1,2,3\}^\N$. It is continuous since $h$ is. We will show that $T\in\mathcal{P}(\N^{<\N})$ is an ill-founded tree if and only if $f(T)\in\sa(\omega_0)$. This means $IF=f^{-1}(\sa(\omega_0))$, and since the preimage of a Borel set by a continuous map is always Borel, it follows that $\sa(\omega_0)$ is not a Borel subset of $X$.

Suppose first that $T\in \mathcal{P}(\N^{<\N})$ is an ill-founded tree. Let $y\in\N^\N$ be an infinite branch of $T$. Then $(T,y)\in R$ and so each of the points $\omega_n(T,y)$, $n\in\N$, belongs to $X$. Clearly there is a backward orbit branch of $\omega_0$ with $(\omega_n(T,y))_n$ as a subsequence. But $\omega_n(T,y) \to 3\conc h(T) = f(T)$ as $n\to \infty$.

Conversely, choose any $T\in \mathcal{P}(\N^{<\N})$ such that $f(T)\in\sa(\omega_0)$. Then we can choose a backward orbit branch of $\omega_0$ with a subsequence converging to $f(T)$, so we get points $\omega_i\in X$ and times $t_i\geq 1$, $i\in\N$, such that $\omega_i\to 3\conc h(T)$ as $i\to\infty$ and
\begin{equation}\label{shift}
\sigma^{t_i}(\omega_{i+1})=\omega_i, \qquad i\in\N.
\end{equation}
We may suppose without loss of generality that each $\omega_i$ begins with the symbol 3. By the definition of $X$ we may write $\omega_i = \omega_{n_i}(T_i,y_i)$ for some $(T_i,y_i)\in R$, $n_i\in\N$. We warn the reader that $y_i\in\N^\N$ is a whole infinite word; the subscript here does not refer to the symbol in position $i$, but to the $i$th infinite word in the sequence. Since $n_i$ counts the number of 3's in $\omega_i$ (except the last 3 before the 4), equations~\eqref{omega} and~\eqref{shift} imply that $n_{i+1} > n_i$. Thus the numbers $n_i$ form an increasing sequence, and in particular $n_i\geq i$ for all $i$. Write $x_i=h(T_i)$ for all $i$. Again from~\eqref{omega} and~\eqref{shift} we get $x_i|n_i=x_{i+1}|n_i$ and $y_i|n_i=y_{i+1}|n_i$ for all $i$. Therefore we may pass to the limits $x=\lim x_i$, $y=\lim y_i$, and since $\omega_i\to 3\conc h(T)$ we get $x=h(T)$. But $h$ is a homeomorphism and therefore $T=\lim T_i$. Since each $(T_i,y_i)\in R$ and $R$ is closed, we conclude $(T,y)\in R$, that is, $T$ is an ill-founded tree. This concludes the proof.
\end{proof}

\section{A non-Borel $\sa$-limit set in the square}\label{sec:square}

The \emph{square} is the Cartesian product $I^2=[0,1]\times[0,1]$ with the usual Euclidean topology. For a mapping $f:X\to X$ a subset $A\subset X$ is called \emph{invariant} if $f(A)\subset A$ and \emph{totally invariant} if $f^{-1}(A)=A$. Two topological dynamical systems $(X,f)$, $(Y,g)$ are called \emph{conjugate} if there is a homeomorphism $h:X\to Y$ such that $h\circ f=g\circ h$. The main goal of this section is to prove the following theorem.

\begin{theorem}\label{th:sq-nonBorel}
There is a continuous surjective map $F : I^2 \to I^2$ and a point $x_0\in I^2$ such that $\sa(x_0)\subset I^2$ is ${\boldsymbol{\Sigma}}^1_1$-complete.
\end{theorem}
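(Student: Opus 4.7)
The plan is to realize the subshift $(X,\sigma|_X)$ from Section~\ref{sec:shift} as a totally invariant subsystem of a continuous surjective triangular map $F: I^2 \to I^2$, and then transfer Theorem~\ref{th:shift-nonBorel} through the embedding. More precisely, I aim to construct $F$ together with a topological embedding $\phi: X \hookrightarrow I^2$ whose image $X_* := \phi(X)$ is closed in $I^2$ and satisfies both the conjugacy $\phi \circ \sigma|_X = F \circ \phi$ and the total invariance $F^{-1}(X_*) = X_*$. With these two properties in place, setting $x_0 := \phi(\omega_0)$ forces every backward orbit of $x_0$ under $F$ to stay inside $X_*$, and such orbits correspond through $\phi^{-1}$ to backward orbits of $\omega_0$ in $X$. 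Since $\phi$ is a homeomorphism and accumulation points transfer, one obtains $\sa(x_0) = \phi(\sa(\omega_0))$, so Borel/non-Borel status is preserved. Theorem~\ref{th:shift-nonBorel} then gives that $\sa(x_0)$ is not Borel in $X_*$, hence not Borel in $I^2$, and combined with Theorem~\ref{th:analytic} this produces a $\bS^1_1$-complete set.

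For the construction, I would embed $5^\N$ into a Cantor set $C \subset I$ via a map $\psi$ whose five top-level cylinder images are disjoint closed subintervals $J_0,\ldots,J_4$, and extend $\sigma$ to a continuous, piecewise linear, $5$-to-$1$ surjection $f: I \to I$ whose monotone branches $f|_{J_i}$ stretch each $J_i$ across $I$ with alternating directions so that $f$ is continuous across the gaps of $C$. The map $F$ is then the skew product $F(x,y) = (f(x), g_x(y))$ for a continuous family of piecewise monotone maps $\{g_x\}_{x\in I}$. A natural first attempt is the graph embedding $\phi(\omega) = (\psi(\omega), \psi(\sigma\omega))$ with $g_x(y) = f(y)$, which realizes the conjugacy but fails total invariance because spurious preimages of $X_*$ appear. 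The family $\{g_x\}$ must therefore be tailored so that the second coordinate preserves the correct backward-orbit structure on $X_*$ while deflecting the extraneous preimages off the graph, and still permits $F$ to cover all of $I^2$.

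The central obstacle is exactly this tailoring, and its source is intrinsic to $X$. In $5^\N$ the preimage set $\sigma^{-1}(X)$ strictly contains $X$, because prepending the symbol $4$ to any $\omega \in X$ whose first symbol is nonzero produces a sequence containing the forbidden pattern ``$4$ followed by a nonzero symbol'': for example $\sigma(4\omega_0) = \omega_0 \in X$ although $4\omega_0 \notin X$. Consequently $f^{-1}(\psi(X))$ strictly contains $\psi(X)$ in $I$, and the horizontal dynamics alone cannot achieve total invariance. The role of $g_x$ is then to detect each ``illegal'' horizontal preimage $x' \in f^{-1}(\psi(X)) \setminus \psi(X)$ and to ensure that the image $g_{x'}(I)$ excludes precisely the $y$-value that would land a preimage on $X_*$, while varying continuously with $x'$ and matching, at $x' \in \psi(X)$, an injective model that sends preimages of $X_*$ bijectively onto preimages in $X_*$. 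At the same time enough other preimages (for instance in $I \setminus C$) must be preserved to guarantee global surjectivity of $F$. Matching these deflections to the combinatorics of the forbidden words identified in Lemma~\ref{lem:X} while preserving piecewise monotonicity is the main technical content of the proof; once in place, the identity $\sa(x_0) = \phi(\sa(\omega_0))$ is a routine consequence of conjugacy, total invariance, and the closedness of $X_*$.
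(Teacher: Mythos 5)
Your high-level strategy coincides with the paper's: embed $(X,\sigma)$ as a closed, totally invariant subsystem of a surjective skew product on $I^2$ and transport $\sa(\omega_0)$ through the conjugacy. You have also correctly isolated the crux: since $\sigma^{-1}(X)\supsetneq X$ (e.g.\ $4\conc\omega_0\notin X$ while $\sigma(4\conc\omega_0)=\omega_0\in X$), the base map alone cannot give total invariance, so the fiber maps must remove the spurious preimages without destroying surjectivity of $F$. But that is exactly where you stop. The passage beginning ``The role of $g_x$ is then to detect each illegal horizontal preimage\dots'' states what the fiber maps must accomplish, not how to build them, and that construction is the entire content of the paper's Proposition~\ref{prop:embedding}; it is not routine and cannot be waved through as ``the main technical content.'' Worse, the mechanism you sketch is the hard version of the problem: with a graph embedding $\omega\mapsto(\psi(\omega),\psi(\sigma\omega))$, the $y$-value to be avoided sits in the \emph{interior} of the fiber, and a continuous image of $[0,1]$ that omits an interior point must omit an entire half of the fiber on one side of it. So the deflection can never be ``precise,'' and continuity of $x\mapsto g_x$ as an illegal $x$ is approached by legal ones (where the fiber image must contain the graph point) becomes a genuine obstruction, not a detail.

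The paper dissolves this difficulty rather than confronting it, and the trick is worth absorbing. The shift space is placed on the bottom edge, $S=A\times\{0\}$ with $A=e(X)$, and each fiber map is the affine surjection $g_x:[0,1]\to[\phi(x),1]$; thus the fiber over $x$ meets the bottom edge iff $\phi(x)=0$, and the ``exclusion'' is simply whether the left endpoint of an interval equals $0$, governed by one continuous scalar function. That function is $\phi(x)=\tfrac12\,d(f(x),B_i)$ for $x$ in the $i$th increasing lap, where $B_i$ is the set of points at least as close to the follower set $A_i=e(\{u : i\conc u\in X\})$ as to every other $A_j$. Since $B_i\cap A=A_i$ and the $B_i$ cover $I$, one reads off directly that $\phi$ vanishes on $A$ (giving the conjugacy), is strictly positive on $f^{-1}(A)\setminus A$ (giving total invariance), and vanishes on at least one $f$-preimage of every point (giving surjectivity of $F$, since the corresponding fiber map is the identity). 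Two smaller points: the paper uses a $9$-lap horseshoe --- five increasing laps carrying the Cantor set joined by four decreasing laps --- rather than five branches of alternating orientation, because orientation-reversing branches on the coding intervals would break the conjugacy with the one-sided shift for an order-preserving coding; and the only property of $X$ the embedding needs is $\sigma(X)=X$ from Lemma~\ref{lem:X}, which your argument should cite explicitly before the construction begins.
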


The proof procedes by constructing the map $F$. It has the form of a skew product $F(x,y)=(f(x),g_x(y))$. There is a closed subset $S$ in the ``$x$-axis'' of the square, $S\subset I\times\{0\}$, such that $F^{-1}(S)=F(S)=S$. The subsystem $(S,F)$ is conjugate to the subshift $(X,\sigma)$ constructed in the previous section.

\begin{proposition}\label{prop:embedding}
There is a surjective continuous map $F:I^2\to I^2$ and a closed subset $S\subset I^2$ such that $F^{-1}(S)=F(S)=S$ and $(S,F)$ is conjugate to $(X,\sigma)$.
\end{proposition}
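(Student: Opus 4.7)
My plan is to realize $(X,\sigma)$ as the $x$-axis slice $S=\tilde S\times\{0\}$ for a suitably embedded closed copy $\tilde S\subset I$ of $X$, and to build the skew product $F(x,y)=(f(x),g_x(y))$ so that $F|_S$ is conjugate to $\sigma$ while $F^{-1}(S)=S$ and $F$ is surjective on $I^2$. The embedding comes from a standard iterated-function-system: pick pairwise-disjoint closed subintervals $J_0,\dots,J_4\subset I$ together with affine contractions $c_i:I\to J_i$; the resulting Cantor set $C=\bigcap_n\bigcup_{|u|=n}c_{u_0}\circ\dots\circ c_{u_{n-1}}(I)$ carries a natural homeomorphism $\phi:5^{\N}\to C$ conjugating $\sigma$ with the inverse-branch map sending $x\in J_i$ to $c_i^{-1}(x)$. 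Setting $\tilde S=\phi(X)$ gives the desired embedding.

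The base map $f:I\to I$ is then defined to agree with $c_i^{-1}$ on each $J_i$ and is extended across the complementary gaps by continuous piecewise-affine segments chosen so that $f$ is piecewise monotone and surjective. Since $\sigma(X)=X$ by Lemma~\ref{lem:X}, this automatically produces $f(\tilde S)=\tilde S$ and $f|_{\tilde S}\circ\phi=\phi\circ\sigma$. However, because $X$ is not closed under preimages by $\sigma$ in $5^{\N}$ (for instance $4\conc v\in\sigma^{-1}(X)\setminus X$ whenever $v\in X$ has any nonzero entry), the base preimage $f^{-1}(\tilde S)$ strictly contains $\tilde S$, so a naive choice $g_x=\mathrm{id}$ would inject spurious points into $F^{-1}(S)$ and break $F^{-1}(S)=S$.

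The principal obstacle is therefore a careful choice of the fibre maps, and this is what I expect to be the main technical step. I plan to take the zero set of $g$ to be
\[
Z=(\tilde S\times\{0\})\cup\Gamma,\qquad \Gamma=\{(x,y^*(x)):x\in I\setminus f^{-1}(\tilde S)\}\subset (I\setminus f^{-1}(\tilde S))\times(0,1],
\]
where $y^*:I\setminus f^{-1}(\tilde S)\to(0,1]$ is a continuous choice with $y^*(x)\to 0$ as $x$ approaches $\tilde S$ from $I\setminus f^{-1}(\tilde S)$. The first piece of $Z$ realizes the shift conjugacy on $S$; the graph $\Gamma$ lies over points whose $f$-image misses $\tilde S$, so it contributes nothing to $F^{-1}(S)$, while $F(x,y^*(x))=(f(x),0)$ supplies the preimages of $(x',0)$ for $x'\notin\tilde S$ that are needed for $F$ to be surjective. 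For $Z$ to be closed in $I^2$ one needs the extraneous preimage set $B:=f^{-1}(\tilde S)\setminus\tilde S$ to be disjoint from the closure of $I\setminus f^{-1}(\tilde S)$; I would arrange this by modifying $f$ on short intervals so that it is locally constant in $\tilde S$ around each point of $B$, pushing $B$ into the interior of $f^{-1}(\tilde S)$. Given $Z$, an explicit $g$ with $g^{-1}(0)=Z$ is then built by piecing together an identity-like fibre map over $\tilde S$, an affine shift by $d(x,\tilde S)$ over $B$ so that $g_x>0$ on $[0,1]$, and a piecewise-linear tent over $I\setminus f^{-1}(\tilde S)$ with its zero at $y^*(x)$ and range equal to $[0,1]$.

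With $f$ and $g$ in hand the required properties reduce to case analysis along the partition $I=\tilde S\sqcup B\sqcup (I\setminus f^{-1}(\tilde S))$: continuity and piecewise monotonicity of $F$ from the explicit piecewise formulas; $F(S)=S$ from $f(\tilde S)=\tilde S$ and $g(x,0)=0$ for $x\in\tilde S$; $F^{-1}(S)=S$ by checking that each of the three types of points in $Z$ either lies in $S$ (if in $\tilde S\times\{0\}$) or has $f$-image outside $\tilde S$ (if in $\Gamma$); surjectivity of $F$ by splitting on whether $x'\in\tilde S$ or not and using the identity-like fibre in the first case and a point of $\Gamma$ in the second; and the conjugacy $(S,F|_S)\cong(X,\sigma)$ via $(x,0)\mapsto\phi^{-1}(x)$. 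The hardest step throughout is keeping $g$ continuous at the boundary of $f^{-1}(\tilde S)$, which is what forces the local flattening of $f$ around $B$.
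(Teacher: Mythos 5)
Your overall architecture --- a horseshoe-type base map carrying an embedded copy $\tilde S$ of $(X,\sigma)$ on the $x$-axis, fibre maps engineered so that the only preimages of the axis lying over $f^{-1}(\tilde S)$ are the axis points of $\tilde S$ itself, and surjectivity recovered through the fibres --- is the same skeleton as the paper's. The gap is in your mechanism for reconciling $F^{-1}(S)=S$ with surjectivity. You correctly observe that your zero set $Z$ is closed only if $B=f^{-1}(\tilde S)\setminus\tilde S$ is disjoint from $\overline{I\setminus f^{-1}(\tilde S)}$, but this condition is not just inconvenient: it is unachievable for any continuous $f$ restricting to the shift on $\tilde S$. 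Concretely, with the five pieces in their natural order, $\bigl(e(3\conc 4\conc 0^\infty),\,e(4\conc 0^\infty)\bigr)$ is a gap of $\tilde S$ (the only word of $X$ beginning with $4$ is $4\conc 0^\infty$, and the only one beginning with $3\conc 4$ is $3\conc 4\conc 0^\infty$), yet its endpoints map to $e(4\conc 0^\infty)$ and $e(0^\infty)$, between which lie uncountably many points of $\tilde S$ (all of $e(\{0,1,2,3\}^\N)$). By the intermediate value theorem the image of this gap covers an interval meeting $\tilde S$ in infinitely many points, so $f^{-1}(\tilde S)$ meets the open gap in a nonempty closed set which is not the whole gap (the gap is connected, $\tilde S$ is not); every boundary point of that set lies in $B\cap\overline{I\setminus f^{-1}(\tilde S)}$. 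Your proposed repair --- flattening $f$ on short intervals around points of $B$ --- cannot remove this: replacing $f$ by a constant $c\in\tilde S$ on a closed interval $[u_1,u_2]$ disjoint from $\tilde S$ merely relocates the offending boundary points to $u_1,u_2$, which again lie in $f^{-1}(\tilde S)\setminus\tilde S$ and are limits of points outside $f^{-1}(\tilde S)$; eliminating them would force $u_1,u_2\in\tilde S$ with equal images under the shift, which is not available here. Consequently $\Gamma$ accumulates on fibres over points of $B$, where $g$ is required to be strictly positive, so by compactness of $[0,1]$ and continuity no $g$ with $g^{-1}(0)=Z$ exists, and the construction breaks. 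This obstruction is not an artifact of the ordering of the pieces.

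The paper avoids the graph $\Gamma$ altogether. It covers $I$ by closed sets $B_i\supseteq A_i=e(X_i)$ satisfying $B_i\cap A=A_i$ (a nearest-set condition), puts $\phi(x)=\tfrac12 d(f(x),B_i)$ for $x$ in the $i$th increasing lap, and uses the increasing fibre map $g_x(y)=\phi(x)+y\bigl(1-\phi(x)\bigr)$ with image $[\phi(x),1]$. Then $(x,y)$ lands on the axis only when $\phi(x)=0$ and $y=0$, and $\phi(x)=0$ together with $f(x)\in A$ forces $f(x)\in B_i\cap A=A_i$, hence $x\in A$; surjectivity comes not from zeros at positive height but from the fact that every $x'\in I$ lies in some $B_i$ and therefore has a preimage over which $\phi$ vanishes and the fibre map is the identity. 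Replacing your $d(x,\tilde S)$ and $\Gamma$ by such a continuously varying fibre minimum, vanishing on a set whose $f$-image is all of $I$ but whose intersection with each lap meets $A$ only where it should, is the missing idea.
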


\begin{proof}
Start with the embedding $e:5^\N \to I$ and the map $f:I\to I$ given by
\begin{gather*}
e(x_0x_1x_2\cdots)=\sum_{j=0}^\infty \frac{2x_j}{9^{j+1}},\\
f(x)=\begin{cases}
9x - 2i, & \text{ for } x\in\left[\frac{2i}{9},\frac{2i+1}{9} \right], i\in\{0,1,2,3,4\} \\[.5em]
(2i+2) - 9x & \text{ for } x\in\left[\frac{2i+1}{9},\frac{2i+2}{9} \right], i\in\{0,1,2,3\}.
\end{cases}
\end{gather*}
Then $f$ is the standard $9$-horseshoe with $5$ increasing laps and $4$ decreasing laps and $C=e(5^\N)\subset I$ is the Cantor set of points whose trajectories stay in the increasing laps, see Figure~\ref{fig:skew1}. Clearly $f\circ e = e\circ \sigma$, and so $(C,f)$ is conjugate to $(5^\N,\sigma)$. Henceforth we will write $I_i=[\frac{2i}{9},\frac{2i+1}{9}]$ for the $i$th increasing lap of $f$.

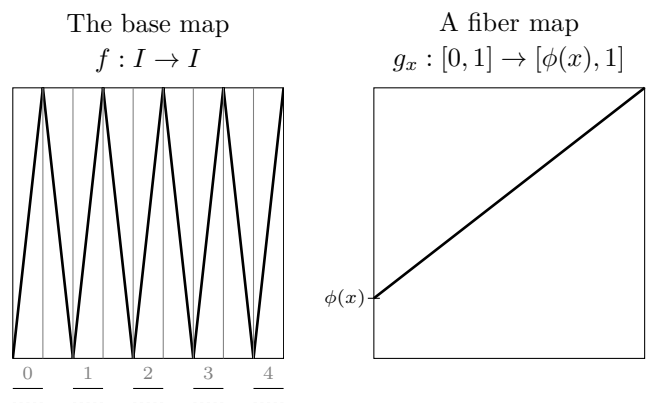
\begin{figure}[htb!!]
\begin{tikzpicture}[scale=0.4]
\begin{scope}
\clip (0,0) rectangle (9,9);
\draw (0,0) -- (9,0) -- (9,9) -- (0,9) -- cycle;
\draw[line width=1] (0,0) -- (1,9) -- (2,0) -- (3,9) -- (4,0) -- (5,9) -- (6,0) -- (7,9) -- (8,0) -- (9,9);
\foreach \i in {1,...,8} {\draw[gray] (\i,0) -- (\i,9);}
\end{scope}
\foreach \i in {0,...,4}{\node[gray] at (2*\i+0.5,-0.5) {\tiny$\i$};}
\node[align=center] at (4.5,10.5) {\small The base map\\\small$f:I\to I$};
\foreach \i in {0,...,4} {\draw (2*\i,-1) -- (2*\i+1,-1);}
\foreach \i in {0,...,4} \foreach \j in {0,...,4} {\draw (2*\i+2*\j/9,-1.5) -- (2*\i+2*\j/9+1/9,-1.5);}
\begin{scope}[shift={(12,0)}]
\draw (0,0) -- (9,0) -- (9,9) -- (0,9) -- cycle;
\draw[line width=1] (0,2) -- (9,9);
\draw (-0.2,2) node[inner sep=0pt] [label={[inner sep=0pt]left:{\tiny$\phi(x)$}}] {}-- (0.2,2);
\node[align=center] at (4.5,10.5) {\small A fiber map\\\small$g_x:[0,1] \to [\phi(x),1]$};
\end{scope}
\end{tikzpicture}
\caption{The components of the skew product mapping $F(x,y)=(f(x),g_x(y))$. The invariant Cantor set $C$ is suggested by the line segments below the graph of $f$. For the definitions of $\phi$ and $g_x$ see equations~\eqref{phi},~\eqref{F}.}
\label{fig:skew1}
\end{figure}

By Lemma~\ref{lem:X} we have $\sigma(X)=X$. That means we can write $X$ as the following finite union of closed sets
\begin{equation*}
X=\bigcup_{i<5} X_i, \quad X_i:=\{x\in X~|~i\conc x \in X\}.
\end{equation*}
Here $X_i$ is the ``follower set'' of the symbol $i$ in $X$, i.e. the set of points in $X$ which can be preceded by the symbol $i$ and still belong to $X$.

Going through the embedding $e$ we get that the closed set $A=e(X)\subset I$ is the union of the closed sets $A_i=e(X_i)$, $i<5$. Moreover, we have $A\subset C$ and $(A,f)$ is conjugate to $(X,\sigma)$. For any closed set $Y\subset I$ we write $d(x,Y)=\min\{|x-y|~:~y\in Y\}$ for the distance from a point $x\in I$ to $Y$. For $i<5$ we define sets
\begin{equation*}
B_i=\{x\in I ~:~ d(x,A_i) \leq d(x,A_j) \text{ for all }j<5\}.
\end{equation*}
Thus $B_i$ represents the points $x\in I$ for which $A_i$ is as close or closer to $x$ than any of the other sets $A_j$. Clearly the sets $B_i$ are closed and their union is all of $I$. For points $x\in A$ we have $x\in B_i$ if and only if $x\in A_i$, because the distance to at least one of the sets $A_i$ is zero. This shows that
\begin{equation}\label{BcapA}
I=\bigcup_{i<5} B_i, \text{ and for all $i<5$, }B_i\cap A = A_i.
\end{equation}

Now we define a function $\phi:I\to [0,\tfrac12]$. Let
\begin{equation}\label{phi}
\phi(x)=\frac12 d(f(x),B_i), \quad \text{for $x\in I_i$, $i<5$}.
\end{equation}
This defines $\phi$ on the increasing laps of $f$. On the decreasing laps of $f$ we have already defined $\phi$ at the endpoints, and we simply extend to any continuous function with $0<\phi(x)\leq \frac12$ whenever $x\in I\setminus\bigcup_{i<5} I_i$.

Now define a skew product mapping $F:I^2\to I^2$ by
\begin{equation}\label{F}
F(x,y)=\big(f(x),g_x(y)\big), \quad \text{where } g_x(y)=\phi(x)+y\cdot(1-\phi(x)),
\end{equation}
see Figures~\ref{fig:skew1} and~\ref{fig:skew2}.

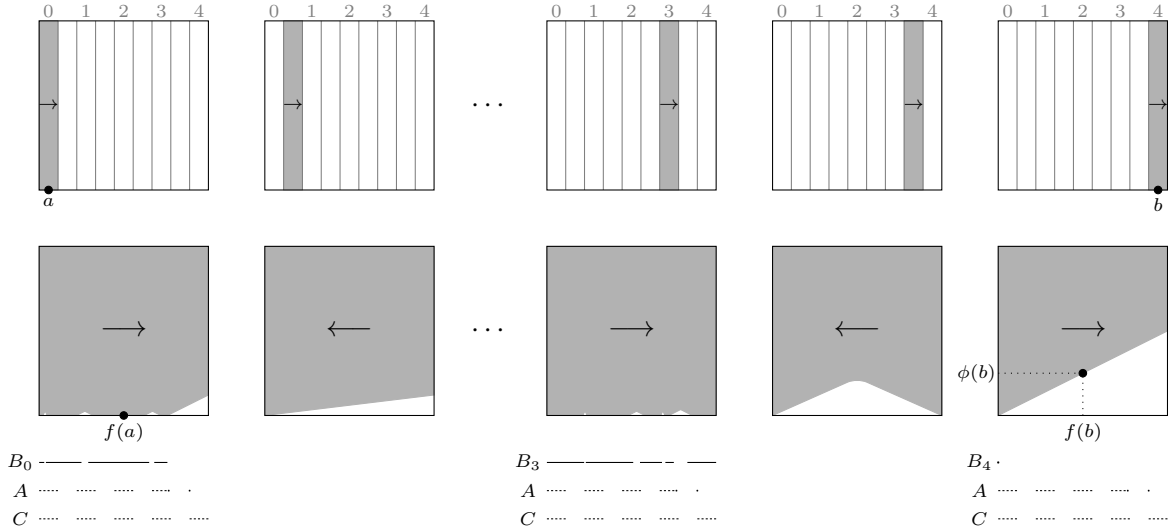
\begin{figure}[htb!!]
\begin{tikzpicture}[scale=0.25]
\draw[fill,gray!60] (0,0) rectangle (1,9);
\draw[fill] (0.5,0) circle [radius=0.2] node [inner sep=0] [label={[inner sep=2pt]below:{\tiny$a$}}] {};
\node at (0.5,4.5) {\tiny$\rightarrow$};
\foreach \i in {1,...,8}{\draw[gray] (\i,0) -- (\i,9);}
\foreach \i in {0,...,4}{\node[gray] at (2*\i+0.5,9.5) {\tiny$\i$};}
\draw (0,0) -- (9,0) -- (9,9) -- (0,9) -- cycle;
\begin{scope}[shift={(0,-12)}]
\draw[fill,gray!60] (0,0) -- 
(9*0.22,0) -- (9*0.27,9*0.025) -- (9*0.32,0) --
(9*0.62,0) -- (9*0.67,9*0.025) -- (9*0.72,0) --
(9*491/648,0) -- (9,9*1/2*157/648) -- (9,9) -- (0,9) -- cycle;
\draw[fill,white] (9*0.03,0) -- (9*0.035,0.1) -- (9*0.04,0) -- cycle;
\node at (4.5,4.5) {$\longrightarrow$};
\draw[fill] (4.5,0) circle [radius=0.2] node [inner sep=0] [label={[inner sep=2pt]below:{\tiny$f(a)$}}] {};
\draw (0,0) -- (9,0) -- (9,9) -- (0,9) -- cycle;
\end{scope}
\begin{scope}[shift={(0,-17.5)}]
\foreach \i in {0,...,4} \foreach \j in {0,...,4} {\draw (2*\i+2*\j/9,0) -- (2*\i+2*\j/9+1/9,0);}
\foreach \i in {0,...,3} \foreach \j in {0,...,4} {\ifthenelse{\i<3 \OR \j<4}{\draw (2*\i+2*\j/9,1.5) -- (2*\i+2*\j/9+1/9,1.5);}{}}
\draw (8,1.5) circle [radius=0.02];
\draw (6+8/9,1.5) circle [radius=0.02];
\draw (0,3) -- 
(9*0.03,3) (9*0.04,3) --
(9*0.25,3) (9*0.29,3) -- 
(9*0.65,3) (9*0.68,3) -- 
(9*491/648,3);
\node at (-1,0) {\tiny$C$};
\node at (-1,1.5) {\tiny$A$};
\node at (-1,3) {\tiny$B_0$};
\end{scope}

\begin{scope}[shift={(12,0)}]
\draw[fill,gray!60] (1,0) rectangle (2,9);
\node at (1.5,4.5) {\tiny$\rightarrow$};
\foreach \i in {1,...,8}{\draw[gray] (\i,0) -- (\i,9);}
\foreach \i in {0,...,4}{\node[gray] at (2*\i+0.5,9.5) {\tiny$\i$};}
\draw (0,0) -- (9,0) -- (9,9) -- (0,9) -- cycle;
\begin{scope}[shift={(0,-12)}]
\draw[fill,gray!60] (0,0) -- (9,9*1/2*157/648) -- (9,9) -- (0,9) -- cycle;
\node at (4.5,4.5) {$\longleftarrow$};
\draw (0,0) -- (9,0) -- (9,9) -- (0,9) -- cycle;
\end{scope}
\end{scope}

\node at (24,4.5) {$\cdots$};
\node at (24,-7.5) {$\cdots$};

\begin{scope}[shift={(27,0)}]
\draw[fill,gray!60] (6,0) rectangle (7,9);
\node at (6.5,4.5) {\tiny$\rightarrow$};
\foreach \i in {1,...,8}{\draw[gray] (\i,0) -- (\i,9);}
\foreach \i in {0,...,4}{\node[gray] at (2*\i+0.5,9.5) {\tiny$\i$};}
\draw (0,0) -- (9,0) -- (9,9) -- (0,9) -- cycle;
\begin{scope}[shift={(0,-12)}]
\draw[fill,gray!60] (0,0) -- 
(9*0.48,0) -- (9*0.53,9*0.025) -- (9*0.58,0) --
(9*0.72,0) -- (9*0.79,9*0.035) -- (9*0.86,0) --
(9,0) -- (9,9) -- (0,9) -- cycle;
\draw[fill,white] (9*0.22,0) -- (9*0.225,0.1) -- (9*0.23,0) -- cycle;
\draw[fill,white] (9*0.68,0) -- (9*0.685,0.1) -- (9*0.69,0) -- cycle;
\node at (4.5,4.5) {$\longrightarrow$};
\draw (0,0) -- (9,0) -- (9,9) -- (0,9) -- cycle;
\end{scope}
\begin{scope}[shift={(0,-17.5)}]
\foreach \i in {0,...,4} \foreach \j in {0,...,4} {\draw (2*\i+2*\j/9,0) -- (2*\i+2*\j/9+1/9,0);}
\foreach \i in {0,...,3} \foreach \j in {0,...,4} {\ifthenelse{\i<3 \OR \j<4}{\draw (2*\i+2*\j/9,1.5) -- (2*\i+2*\j/9+1/9,1.5);}{}}
\draw (8,1.5) circle [radius=0.02];
\draw (6+8/9,1.5) circle [radius=0.02];
\draw (0,3) --
(9*0.22,3) (9*0.23,3) --
(9*0.51,3) (9*0.55,3) --
(9*0.68,3) (9*0.7,3) --
(9*0.75,3) (9*0.83,3) -- (9,3);
\node at (-1,0) {\tiny$C$};
\node at (-1,1.5) {\tiny$A$};
\node at (-1,3) {\tiny$B_3$};
\end{scope}
\end{scope}

\begin{scope}[shift={(39,0)}]
\draw[fill,gray!60] (7,0) rectangle (8,9);
\node at (7.5,4.5) {\tiny$\rightarrow$};
\foreach \i in {1,...,8}{\draw[gray] (\i,0) -- (\i,9);}
\foreach \i in {0,...,4}{\node[gray] at (2*\i+0.5,9.5) {\tiny$\i$};}
\draw (0,0) -- (9,0) -- (9,9) -- (0,9) -- cycle;
\begin{scope}[shift={(0,-12)}]
\draw[fill,gray!60] [rounded corners] (0,0) -- (4.5,2) [sharp corners] -- (9,0) -- (9,0) -- (9,9) -- (0,9) -- cycle;
\node at (4.5,4.5) {$\longleftarrow$};
\draw (0,0) -- (9,0) -- (9,9) -- (0,9) -- cycle;
\end{scope}
\end{scope}

\begin{scope}[shift={(51,0)}]
\draw[fill,gray!60] (8,0) rectangle (9,9);
\node at (8.5,4.5) {\tiny$\rightarrow$};
\foreach \i in {1,...,8}{\draw[gray] (\i,0) -- (\i,9);}
\foreach \i in {0,...,4}{\node[gray] at (2*\i+0.5,9.5) {\tiny$\i$};}
\draw (0,0) -- (9,0) -- (9,9) -- (0,9) -- cycle;
\draw[fill] (9*17/18,0) circle [radius=0.2] node [inner sep=0] [label={[inner sep=2pt]below:{\tiny$b$}}] {};
\begin{scope}[shift={(0,-12)}]
\draw[fill,gray!60] (0,0) -- (9,4.5) -- (9,9) -- (0,9) -- cycle;
\node at (4.5,4.5) {$\longrightarrow$};
\draw (0,0) -- (9,0) -- (9,9) -- (0,9) -- cycle;
\draw[fill] (1/2*9,1/4*9) circle [radius=0.2];
\draw[dotted] (4.5,0) -- (4.5,2.25);
\node at (4.5,0) [inner sep=0] [label={[inner sep=2pt]below:{\tiny$f(b)$}}] {};
\draw[dotted] (0,2.25) -- (4.5,2.25);
\node at (0,2.25) [inner sep=0] [label={[inner sep=0]left:{\tiny$\phi(b)$}}] {};
\end{scope}
\begin{scope}[shift={(0,-17.5)}]
\foreach \i in {0,...,4} \foreach \j in {0,...,4} {\draw (2*\i+2*\j/9,0) -- (2*\i+2*\j/9+1/9,0);}
\foreach \i in {0,...,3} \foreach \j in {0,...,4} {\ifthenelse{\i<3 \OR \j<4}{\draw (2*\i+2*\j/9,1.5) -- (2*\i+2*\j/9+1/9,1.5);}{}}
\draw (8,1.5) circle [radius=0.02];
\draw (6+8/9,1.5) circle [radius=0.02];
\draw (0,3) circle [radius=0.03];
\node at (-1,0) {\tiny$C$};
\node at (-1,1.5) {\tiny$A$};
\node at (-1,3) {\tiny$B_4$};
\end{scope}
\end{scope}
\end{tikzpicture}\\
\caption{The map $F:I^2\to I^2$ applied to nine rectangular regions. Arrows indicate orientation. $C$ is the full Cantor set $5^\N$ embedded into $I$. $A$ is the embedded copy of the shift space $X$. The sets $B_i$ have gaps around the points that cannot be preceded by the symbol $i$. For example, $B_0$ is missing the embedded image of $40^\infty$, since this point can only be preceded by a $3$, and $B_4$ is a singleton, since only $0^\infty$ can be preceded by a $4$. (Note: the gaps are not drawn to scale, and many smaller gaps are not visible at all). The choice of $\phi$ causes the image of each rectangle $I_i\times I$ to meet the $x$-axis of the square only in the set $B_i$. The point $a$ represents $02^\infty$, so $a,f(a)\in A$ and $\phi(a)=0$. The point $b$ represents $42^\infty$, so $b\not\in A$, $f(b)\in A$, and $\phi(b)>0$.} 
\label{fig:skew2}
\end{figure}

Let $E:5^\N\to I^2$ be the embedding $E(x)=(e(x),0)$ and let $S=E(X)=A\times\{0\}$. We claim that $\phi$ has the following three properties:

\begin{enumerate}[label=(\roman*)]
\item $\phi(x)=0$ for all $x\in A$,
\item $\phi(x)>0$ if $x\not\in A$ but $f(x)\in A$,
\item For each $y\in I$ there is a preimage $x\in f^{-1}(y)$ with $\phi(x)=0$.
\end{enumerate}

Property (i) gives $F\circ E=E \circ\sigma$, which implies that $F(S)=S$ and $(S,F)$ is conjugate to $(X,\sigma)$. Property (ii) implies that $F^{-1}(S)=S$. Property (iii) makes $F$ surjective. This is everything we needed to show about the system $(S,F)$. It remains to establish properties (i) -- (iii) of the function $\phi$.

To prove property (i) choose $x\in A$ and write $x=e(x_0x_1\cdots)$ with $x_0x_1\ldots\in X$. Let $i=x_0$ so $x\in I_i$. Then $x_1x_2\cdots \in X_i$ so $f(x)\in A_i \subset B_i$ and therefore $\phi(x)=0$.

To prove property (ii) we choose a point $x\not\in A$ such that $f(x)\in A$. There are two cases. First, suppose $x\not\in C$. If $x$ is in an increasing lap of $f$ then $f(x)\not\in C$ either, contradicting $f(x)\in A$. But if $x$ is not in an increasing lap of $f$ then $\phi(x)>0$ by the definition of $\phi$, so we are done. Second, suppose $x\in C\setminus A$. Then we can write $x = e(x_0 x_1 \cdots)$ with $x_0 x_1\cdots \in 5^\N\setminus X_i$. Let $i=x_0$, so $I_i$ is the lap containing $x$. Then $x_1 x_2 \cdots \not\in X_i $, so $f(x)\not\in A_i$. But we supposed $f(x)\in A$, so by~\eqref{BcapA} we have $f(x)\not\in B_i$. Therefore $\phi(x)>0$ by the definition of $\phi$.

To prove property (iii) pick any point $y\in I$. Since the sets $B_i$ cover $I$ we can find $i$ with $y\in B_i$. Then we take $x=f|_{I_i}^{-1}(y)\in I_i$. Then $\phi(y)=0$ by the definition of $\phi$.
\end{proof}

\begin{proof}[Proof of Theorem~\ref{th:sq-nonBorel}]
By Theorem~\ref{th:shift-nonBorel} the subshift $(X,\sigma)$ has a point $\omega_0$ such that $\sa(\omega_0)\subset X$ is not Borel. By Proposition~\ref{prop:embedding} there is a continuous surjection $F:I^2\to I^2$ and a closed subset $S\subset I^2$ such that $F^{-1}(S)=F(S)=S$ and $(S,F)$ is conjugate to $(X,\sigma)$. Let $E:X\to S \subset I^2$ denote the conjugacy and let $x_0 = E(\omega_0)$. The backward orbit branches of $x_0$ in the system $(I^2,F)$ are the same as the backward orbit branches in the subsystem $(S,F)$, and these correspond through the conjugacy to the backward orbit branches of $\omega_0$ in $(X,\sigma)$. Therefore $\sa(x_0)=E(\sa(\omega_0))$. Since $E$ is a topological embedding we conclude that $\sa(x_0)\subset I^2$ is not Borel.
\end{proof}

\begin{remark}\label{rem:embedding}
The only property of the subshift $(X,\sigma)$ which was really needed in the proof of Proposition~\ref{prop:embedding} was surjectivity $\sigma(X)=X$. Thus, the same proof technique leads to the following general embedding result.
\end{remark}

\begin{proposition}
Any subshift $X\subseteq\{0,\ldots,r-1\}^{\N}$, $r\geq2$, with $\sigma(X)=X$ can be embedded as a totally invariant subsystem of a surjective map on the square.
\end{proposition}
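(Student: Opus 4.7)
The plan is to imitate the construction in the proof of Proposition~\ref{prop:embedding}, substituting the alphabet size $5$ with an arbitrary $r \geq 2$. First, I would replace the base map with the $(2r{-}1)$-horseshoe $f:I\to I$ having $r$ increasing laps $I_i=[\tfrac{2i}{2r-1},\tfrac{2i+1}{2r-1}]$, $i=0,\ldots,r-1$, and $r-1$ decreasing laps interpolating between them, together with the embedding $e:\{0,\ldots,r-1\}^{\N}\to I$ given by $e(x_0x_1\cdots)=\sum_{j\geq0}\tfrac{2x_j}{(2r-1)^{j+1}}$. Exactly as in the $r=5$ case, $f\circ e=e\circ\sigma$, and $C:=e(\{0,\ldots,r-1\}^{\N})$ is an invariant Cantor set on which $f$ is conjugate to the full one-sided $r$-shift.

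Next, the hypothesis $\sigma(X)=X$ lets me write $X=\bigcup_{i<r}X_i$, where $X_i=\{x\in X~:~i\conc x\in X\}$ is the follower set of the symbol $i$. Put $A=e(X)$, $A_i=e(X_i)$, and
\begin{equation*}
B_i=\{x\in I~:~d(x,A_i)\leq d(x,A_j)\text{ for all }j<r\}.
\end{equation*}
The same two-line verification as in the paper yields $\bigcup_{i<r}B_i=I$ and $B_i\cap A=A_i$ for each $i<r$.

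Then I would introduce the height function $\phi:I\to[0,\tfrac12]$ by declaring $\phi(x)=\tfrac12 d(f(x),B_i)$ for $x\in I_i$, $i<r$, and extending $\phi$ continuously on the decreasing laps so that $0<\phi(x)\leq\tfrac12$ there (possible provided $r\geq 2$, since then at least one decreasing lap exists on which $\phi$ is already defined to be nonzero at the endpoints). Define the skew product $F(x,y)=(f(x),\phi(x)+y(1-\phi(x)))$, set $S=A\times\{0\}$, and let $E:X\to S$ be $E(x)=(e(x),0)$. The three properties of $\phi$ from the original proof, namely (i) $\phi\equiv 0$ on $A$, (ii) $\phi>0$ on $f^{-1}(A)\setminus A$, and (iii) every $y\in I$ has some preimage under $f$ with $\phi=0$, transfer verbatim and give $F(S)=S$, $F^{-1}(S)=S$, surjectivity of $F$, and the conjugacy $F\circ E=E\circ\sigma$.

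The proof requires no genuinely new ideas; the main thing to check is that the edge case $r=2$ (a $3$-horseshoe with a single decreasing lap) still admits the required continuous, strictly positive extension of $\phi$ on its one decreasing lap, which is clear. The construction otherwise is a direct parametrization of the $r=5$ argument.
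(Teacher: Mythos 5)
Your proposal is correct and follows essentially the same route as the paper, which likewise just replaces the $9$-horseshoe by the full $(2r-1)$-horseshoe with $r$ increasing laps and reruns the argument of Proposition~\ref{prop:embedding} verbatim. (One immaterial slip: on a decreasing lap the endpoint values of $\phi$ need not be nonzero --- the formula $\phi(x)=\tfrac12 d(f(x),B_i)$ can vanish there --- but the required extension, continuous and strictly positive only on the \emph{interior} of the lap, always exists.)
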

\begin{proof}
The proof is essentially the same as the proof of Proposition~\ref{prop:embedding}. To accomodate a shift space with $r$ symbols we divide the square into $2r-1$ vertical strips and use for the base map the full $2r-1$ horseshoe. No other significant changes are needed.
\end{proof}

\begin{remark}
In dimension one, the $\sa$-limit sets of an interval map $f:I\to I$ are always Borel and can only occur at or below the second level of the Borel hierarchy (they are simultaneously $F_\sigma$ and $G_\delta$, see~\cite{HantakRoth}). But the possibility to embed subshifts into the square without the restrictions imposed by the order structure of $\mathbb{R}$ gives much more flexibility to dynamical systems in dimension two than in dimension one. This gives some explanation of why their special $\alpha$-limit sets can be more complex.
\end{remark}

\end{document}